\newtheorem{theorem}{Theorem}[section]
\newtheorem{lemma}[theorem]{Lemma}
\newtheorem{proposition}[theorem]{Proposition}
\numberwithin{equation}{section}
\begin{document}

\title[Cartan geometry and nef tangent bundle]{Holomorphic Cartan 
geometry on manifolds\\[5pt] with numerically effective tangent bundle}

\author[I. Biswas]{Indranil Biswas}

\address{School of Mathematics, Tata Institute of Fundamental
Research, Homi Bhabha Road, Bombay 400005, India}

\email{indranil@math.tifr.res.in}

\author[U. Bruzzo]{Ugo Bruzzo}

\address{Scuola Internazionale Superiore di Studi Avanzati,
Via Beirut 2--4, 34013, Trieste, and Istituto Nazionale di
Fisica Nucleare, Sezione di Trieste, Italy}

\email{bruzzo@sissa.it}

\subjclass[2000]{32M10, 14M17, 53C15}

\keywords{Cartan geometry, numerically effectiveness,
rational homogeneous space}

\date{}

\begin{abstract}
Let $X$ be a compact connected K\"ahler manifold such
that the holomorphic tangent bundle $TX$ is numerically effective.
A theorem of \cite{DPS} says that there is a finite unramified Galois 
covering $M\, \longrightarrow\, X$, a
complex torus $T$, and a holomorphic surjective submersion
$f\, :\, M\,\longrightarrow\,T$, such that the fibers of $f$ are
Fano manifolds with numerically effective tangent bundle.
A conjecture of Campana and Peternell says that the fibers of
$f$ are rational and homogeneous.
Assume that $X$ admits a holomorphic Cartan geometry. We prove that
the fibers of $f$ are rational homogeneous varieties.
We also prove that the holomorphic principal
${\mathcal G}$--bundle over $T$ given by $f$, where $\mathcal G$
is the group of all holomorphic automorphisms of a fiber, admits
a flat holomorphic connection.
\end{abstract}

\maketitle

\section{Introduction}\label{intro.}

Let $X$ be a compact connected K\"ahler manifold such that the
holomorphic tangent bundle $TX$ is numerically effective. (The
notions of numerically effective vector bundle and numerically
flat vector bundle over a compact K\"ahler manifold were
introduced in \cite{DPS}.) From a theorem of Demailly, Peternell
and Schneider we know that there is a finite unramified
Galois covering
$$
\gamma\, :\, M\, \longrightarrow\, X\, ,
$$
a complex torus $T$, and a holomorphic surjective submersion
$$
f\, :\, M\,\longrightarrow\,T\, ,
$$
such that the fibers of $f$ are
Fano manifolds with numerically effective tangent bundle (see
\cite[p. 296, Main~Theorem]{DPS}). It is conjectured
by Campana and Peternell that the fibers of $f$ are
rational homogeneous varieties (i.e., varieties
of the form ${\mathcal G}/P$, where $P$ is
a parabolic subgroup of a complex semisimple group ${\mathcal G}$)
\cite[p. 170]{CP}, \cite[p. 296]{DPS}. Our aim here is to verify this 
conjecture under the extra assumption that $X$ admits a holomorphic 
Cartan geometry.

Let $(E'_H\, ,\theta')$
be a holomorphic Cartan geometry on $X$ of type $G/H$, where
$H$ is a complex Lie subgroup of a complex Lie group $G$.
(The definition of Cartan geometry is recalled in
Section \ref{sec2}.) Consider the pullback $\theta$ of
$\theta'$ to the holomorphic principal $H$--bundle
$E_H\,:=\, \gamma^*E'_H$, where $\gamma$ is the above
covering map. The pair $(E_H\, ,\theta)$ 
is a holomorphic Cartan geometry on $M$. Using
$(E_H\, ,\theta)$ we prove the following theorem (see
Theorem \ref{thm1}):

\begin{theorem}\label{thm0}
There is a semisimple linear algebraic group $\mathcal 
G$ over $\mathbb C$, a parabolic subgroup $P\, \subset\,
\mathcal G$, and a holomorphic principal $\mathcal G$--bundle
$$
{\mathcal E}_{\mathcal G}\, \longrightarrow\, T\, ,
$$
such that the fiber bundle ${\mathcal E}_{\mathcal G}/P\,
\longrightarrow\, T$ is holomorphically isomorphic to the
fiber bundle $f\, :\, M\, \longrightarrow\, T$.
\end{theorem}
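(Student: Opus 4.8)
The plan is to reduce the theorem to the assertion that every fiber $F\,=\,f^{-1}(t)$ of $f$ is a rational homogeneous variety, and then to reconstruct $\mathcal E_{\mathcal G}$ from the resulting locally trivial family $f\,:\,M\,\longrightarrow\,T$.

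The first step is to see that the Cartan geometry on $M$ forces each fiber $F$ to be rational homogeneous. Since $T$ is a complex torus, $TT$ is holomorphically trivial, so the relative tangent bundle $T_f\,=\,\ker(df)$ sits in an exact sequence $0\,\longrightarrow\, T_f\,\longrightarrow\, TM\,\longrightarrow\, f^*TT\,\longrightarrow\, 0$ with $f^*TT\,\cong\,\mathcal O_M^{\oplus d}$, $d\,=\,\dim T$; in particular the normal bundle of $F$ in $M$ is holomorphically trivial and $T_f|_F\,=\,TF$. I would transport this datum along the pointwise isomorphism $\theta\,:\,TE_H\,\longrightarrow\,\mathcal O_{E_H}\otimes\mathfrak g$ in order to organize $(E_H\, ,\theta)|_F$ into a holomorphic Cartan geometry on $F$ of a sub--type $G_1/H_1$ with $\dim(G_1/H_1)\,=\,\dim F$ (the triviality of the normal bundle of $F$ being what makes the "torus directions'' of $\mathfrak g$ constant along $F$; if the reduction of structure group does not close up on the nose, one instead exploits that $TF$ is numerically effective to rigidify $E_H|_F$ first). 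Then I would invoke the rigidity of holomorphic Cartan geometries on Fano manifolds: a fiber $F$ is Fano with numerically effective tangent bundle, hence simply connected and covered by rational curves, and for such a manifold a holomorphic Cartan geometry is necessarily flat — the curvature is a global holomorphic section of a bundle which, by the positivity of $TF$ together with the vanishing of the Chern classes forced by the underlying holomorphic connection, admits no nonzero section — so the developing map of the flat geometry identifies $F$ with a compact model $G_1/H_1$, which for a Fano $F$ is forced to be a rational homogeneous variety $\mathcal G/P$ with $\mathcal G$ semisimple linear algebraic and $P$ parabolic. This is the main obstacle: both producing the induced geometry on the fibers and proving its flatness require genuine work, and the flatness argument is the crux of the proof. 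Fixing one fiber $F_0\,\cong\,\mathcal G/P$ and using connectedness of $T$, every fiber of $f$ is then biholomorphic to $\mathcal G/P$.

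Granting this, the remaining step is essentially formal. By the Fischer--Grauert theorem, a proper holomorphic submersion all of whose fibers are biholomorphic to a fixed compact complex manifold is a locally trivial holomorphic fiber bundle, so $f$ is a holomorphic $\mathcal G/P$--bundle with structure group $\mathrm{Aut}(\mathcal G/P)$. After replacing $\mathcal G$ by $\mathrm{Aut}^0(\mathcal G/P)$ — which is again semisimple and satisfies $\mathcal G/P\,=\,\mathrm{Aut}^0(\mathcal G/P)/P'$ for a parabolic subgroup $P'$ — it remains to reduce the structure group from $\mathrm{Aut}(\mathcal G/P)$ to $\mathcal G$: the induced Cartan geometry on the fibers, being flat on the simply connected $F$, trivializes the relevant principal bundles fiberwise and hence furnishes transition data for $f$ valued in $\mathcal G$ itself, which removes the (a priori nontrivial, since $\pi_1(T)\,\neq\,0$) component--group contribution. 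The associated principal $\mathcal G$--bundle $\mathcal E_{\mathcal G}\,\longrightarrow\,T$ then satisfies $\mathcal E_{\mathcal G}/P\,=\,\mathcal E_{\mathcal G}\times^{\mathcal G}(\mathcal G/P)\,\cong\, M$ over $T$, which is the assertion of the theorem.
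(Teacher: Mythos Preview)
Your first step has a genuine gap. There is no mechanism for restricting a Cartan geometry of type $G/H$ on $M$ to a Cartan geometry of some sub--type $G_1/H_1$ on a fiber $F$: the restricted form $\theta|_{E_H|_F}$ embeds $T(E_H|_F)$ as a rank--$(\dim F+\dim H)$ sub-bundle of the trivial bundle $E_H|_F\times\mathfrak g$, but this sub-bundle has no reason to be constant (triviality of the normal bundle of $F$ in $M$ does \emph{not} force the corresponding directions in $\mathfrak g/\mathfrak h$ to be constant along $E_H|_F$), and even if it were, the resulting subspace $\mathfrak g_1\subset\mathfrak g$ need not be a Lie subalgebra. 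Your parenthetical fallback (``rigidify $E_H|_F$ first using nefness of $TF$'') does not address this. Your flatness argument is likewise unsubstantiated, and even granting flatness, the developing map of a flat Cartan geometry on a simply connected compact manifold is only a local biholomorphism to the model, not automatically an isomorphism.

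The paper sidesteps all of this. It never tries to produce a Cartan geometry on the fibers. Instead it passes to the extended bundle $E_G=E_H\times^H G$ on $M$, on which $\theta$ induces a genuine holomorphic connection $\nabla^G$ (this is the standard fact that a Cartan connection becomes an Ehresmann connection after extension of structure group). The induced holomorphic connection on $\mathrm{ad}(E_G)$ then restricts to each fiber $F$; since $F$ is Fano it is rationally connected, and any holomorphic connection on a vector bundle over a rationally connected variety is flat, so $\mathrm{ad}(E_G)|_F$ is trivial ($F$ being simply connected). The Cartan geometry also gives an exact sequence $0\to\mathrm{ad}(E_H)\to\mathrm{ad}(E_G)\to TM\to 0$; restricting to $F$ and quotienting out the trivial normal directions, one extracts (via Atiyah's Krull--Schmidt theorem) a trivial bundle surjecting onto $TF$. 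Thus $TF$ is globally generated, so $F$ is homogeneous, and being Fano it is then $\mathcal G'/P'$. The remaining construction of $\mathcal E_{\mathcal G}$ is close to your second step, using rigidity of $\mathcal G'/P'$ in place of Fischer--Grauert.
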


The group $\mathcal G$ in Theorem \ref{thm0} is the group
of all holomorphic automorphisms of a fiber of $f$.
Let ${\rm ad}({\mathcal E}_{\mathcal G})
\,\longrightarrow\, T$ be the adjoint vector bundle of the
principal $\mathcal G$--bundle ${\mathcal E}_{\mathcal G}$
in Theorem \ref{thm0}.
Let $K^{-1}_f\,\longrightarrow\, M$ be the relative
anti--canonical line bundle for the projection $f$.

We prove the following (see Proposition \ref{lem1} and Proposition
\ref{lem3}):

\begin{proposition}\label{lem0}
Let $X$ is a compact connected K\"ahler manifold such
that $TX$ is numerically effective, and let
$(E'_H\, ,\theta')$
be a holomorphic Cartan geometry on $X$ of type $G/H$. Then
the following two statements hold:
\begin{enumerate}
\item The adjoint vector bundle ${\rm ad}({\mathcal E}_{\mathcal G})$
is numerically flat.

\item The principal $\mathcal G$--bundle
${\mathcal E}_{\mathcal G}$ admits a flat holomorphic connection.
\end{enumerate}
\end{proposition}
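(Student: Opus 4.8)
The plan is to establish (1) first, extracting it from the numerical effectiveness of $TM$ through the fibration $f$, and then to deduce (2) from (1). As preliminaries: since $\gamma$ is finite and unramified, $TM=\gamma^{*}TX$ is numerically effective. Let $T_{f}\subset TM$ be the relative tangent bundle of $f$, so that $0\to T_{f}\to TM\to f^{*}TT\to 0$; because $T$ is a complex torus this sequence has trivial quotient, $f^{*}TT\cong\mathcal O_{M}^{\oplus\dim T}$, and $T_{f}$ is again numerically effective (this is built into the structure theory of \cite{DPS}; in fact the sequence splits, so $T_{f}$ is a direct summand of $TM$). The isomorphism $\mathcal E_{\mathcal G}/P\cong M$ identifies the adjoint bundle with the direct image $f_{*}T_{f}$: its fibre over $t\in T$ is $H^{0}(M_{t},TM_{t})=\operatorname{Lie}\mathcal G$, and $R^{q}f_{*}T_{f}=0$ for $q\ge 1$ since the fibre $M_{t}\cong\mathcal G/P$ is rigid. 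Equivalently, over $M$ there is an exact sequence $0\to\mathrm{ad}(E_{P})\to f^{*}\mathrm{ad}(\mathcal E_{\mathcal G})\to T_{f}\to 0$, where $E_{P}\to M$ is the principal $P$-bundle $\mathcal E_{\mathcal G}\to\mathcal E_{\mathcal G}/P=M$, and $f^{*}\mathrm{ad}(\mathcal E_{\mathcal G})$ is trivial on every fibre of $f$. Finally, since $\mathcal G$ is semisimple, the Killing form makes $\mathrm{ad}(\mathcal E_{\mathcal G})$ self-dual and forces $c_{1}(\mathrm{ad}(\mathcal E_{\mathcal G}))=0$.

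Turning to (1): by the criterion of \cite{DPS} that a numerically effective bundle with vanishing first Chern class is numerically flat, it suffices to prove that $\mathrm{ad}(\mathcal E_{\mathcal G})$ is numerically effective, and since $f$ is surjective this amounts to the numerical effectiveness of $f^{*}\mathrm{ad}(\mathcal E_{\mathcal G})$ on $M$. I expect this to be the main obstacle. The point is that numerical effectiveness here must be extracted from that of $TM$ \emph{together with} the hypothesis that the base of the fibration is a torus: over a positively curved base the conclusion fails — for the $\mathbb P^{1}$-bundle $\mathbb P(T\mathbb P^{2})\to\mathbb P^{2}$ (the flag manifold $\mathrm{SL}_{3}/B$) the total space has numerically effective (indeed globally generated) tangent bundle, yet the adjoint bundle of the associated $\mathrm{PGL}_{2}$-bundle is $\operatorname{End}_{0}(T\mathbb P^{2})$, which is not numerically effective — so the triviality of $f^{*}TT$ must really be used. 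The route I would take is to push $TM$ down along $f$: using the splitting $TM\cong T_{f}\oplus f^{*}TT$ one gets $f_{*}TM\cong\mathrm{ad}(\mathcal E_{\mathcal G})\oplus\mathcal O_{T}^{\oplus\dim T}$, reducing the problem to the numerical flatness of $f_{*}TM$, which one attacks by playing the numerical effectiveness of $TM$ against the triviality of $f^{*}TT$ and the positivity of the Fano fibres, so as to realise $\mathrm{ad}(\mathcal E_{\mathcal G})$ simultaneously as a subsheaf and as a quotient of a numerically flat sheaf on $T$. Combined with $c_{1}(\mathrm{ad}(\mathcal E_{\mathcal G}))=0$, numerical effectiveness then upgrades to numerical flatness, which is (1).

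For (2), grant (1) and put $V=\mathrm{ad}(\mathcal E_{\mathcal G})$. By \cite{DPS}, $V$ admits a filtration by subbundles whose successive quotients are Hermitian flat; Hermitian flat bundles carry flat holomorphic connections, and over the compact Kähler manifold $T$ the obstruction to lifting such connections through the filtration is controlled by the Hodge decomposition of the first cohomology of the local system underlying $V$, so the Atiyah class $\mathrm{at}(V)\in H^{1}(T,\Omega^{1}_{T}\otimes\operatorname{End}(V))$ vanishes. Since $\mathcal G$ is semisimple, its adjoint representation is a direct summand of the representation on $\operatorname{End}(\operatorname{Lie}\mathcal G)$, so $\mathrm{ad}(\mathcal E_{\mathcal G})$ is a holomorphic direct summand of $\operatorname{End}(\mathrm{ad}(\mathcal E_{\mathcal G}))$, and hence $\mathrm{at}(\mathcal E_{\mathcal G})=0$ is equivalent to $\mathrm{at}(V)=0$; therefore $\mathcal E_{\mathcal G}$ admits a holomorphic connection. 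Over a compact Kähler manifold such a connection can be rendered flat — its curvature is a holomorphic, hence harmonic, $\mathrm{ad}(\mathcal E_{\mathcal G})$-valued $2$-form, which one removes using the freedom to modify the connection by a holomorphic $\mathrm{ad}(\mathcal E_{\mathcal G})$-valued $1$-form together with the vanishing of characteristic classes supplied by (1), this being the principal-bundle version of Atiyah's argument; alternatively one quotes the theorem that a holomorphic principal bundle with numerically flat adjoint bundle over a compact Kähler manifold admits a flat holomorphic connection. This gives (2).
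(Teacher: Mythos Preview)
Your argument for (1) has a genuine gap at exactly the place you flag as ``the main obstacle'': you never actually produce a proof that $\mathrm{ad}(\mathcal E_{\mathcal G})$ (or its pullback to $M$) is numerically effective. Realising $\mathrm{ad}(\mathcal E_{\mathcal G})$ as $f_{*}T_{f}$ does not help, since push-forward does not preserve nefness, and your suggestion to ``realise $\mathrm{ad}(\mathcal E_{\mathcal G})$ simultaneously as a subsheaf and as a quotient of a numerically flat sheaf on $T$'' presupposes a numerically flat sheaf you have not yet constructed. The exact sequence $0\to\mathrm{ad}(E_{P})\to f^{*}\mathrm{ad}(\mathcal E_{\mathcal G})\to T_{f}\to 0$ goes the wrong way for this purpose, and the claimed splitting of $0\to T_{f}\to TM\to f^{*}TT\to 0$ is not something you can take for granted (it is not in \cite{DPS}, and in fact it is essentially equivalent to what you are trying to prove).

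The paper bypasses the whole push-forward strategy. The key observation---and this is precisely where the torus hypothesis enters, answering your own worry about the flag-manifold counterexample---is that $K_{T}$ is trivial, so the relative anti-canonical bundle $K^{-1}_{f}$ coincides with $K^{-1}_{M}=\det TM$, which is nef because $TM$ is. Now $K^{-1}_{f}$ is the line bundle on $\mathcal E_{\mathcal G}/\mathcal P$ associated to a strictly anti-dominant character of $\mathcal P$ (it is relatively ample), and the paper invokes an equivalence (Proposition~\ref{prop2}, from \cite{BS} and \cite{BB}): for a reductive principal bundle, nefness of such an associated line bundle is equivalent to numerical flatness of the adjoint bundle, and also to pseudostability with $c_{2}(\mathrm{ad})=0$. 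This gives (1) in one stroke. For (2) the paper then cites \cite[Theorem~1.1]{BG}, which produces a flat holomorphic connection directly from pseudostability and $c_{2}=0$; your alternative at the end of (2) is essentially this citation, so once (1) is fixed your (2) is fine, though your primary Atiyah-class argument for flattening the curvature is itself only sketched.
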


\section{Cartan Geometry and numerically effectiveness}\label{sec2}

Let $G$ be a connected complex Lie group. Let
$
H\, \subset\, G
$
be a connected complex Lie subgroup. The Lie algebra of
$G$ (respectively, $H$) will be denoted by $\mathfrak g$
(respectively, $\mathfrak h$).

Let $Y$ be a connected
complex manifold. The holomorphic tangent bundle of $Y$ will be
denoted by $TY$.
Let $E_H\, \longrightarrow\, Y$ be a holomorphic principal
$H$--bundle. For any $g\,\in\, H$, let
\begin{equation}\label{betah}
\beta_g\,:\, E_H\, \longrightarrow\, E_H
\end{equation}
be the biholomorphism defined by $z\, \longmapsto\, zg$.
For any $v\, \in\, {\mathfrak h}$, let
\begin{equation}\label{z}
\zeta_v\, \in\, H^0(E_H,\, TE_H)
\end{equation}
be the holomorphic vector field on $E_H$ associated
to the one--parameter family of biholomorphisms
$t\,\longmapsto \, \beta_{\exp(tv)}$.
Let
$$
\text{ad}(E_H)\, :=\, E_H\times^H{\mathfrak h}\,\longrightarrow\,Y
$$
be the adjoint vector bundle associated $E_H$ for the
adjoint action of $H$ on $\mathfrak h$. The adjoint vector
bundle of a principal $G$--bundle is defined similarly.

A \textit{holomorphic Cartan geometry} of type $G/H$
on $Y$ is a holomorphic principal $H$--bundle
\begin{equation}\label{e0}
p\, :\, E_H\, \longrightarrow\, Y
\end{equation}
together with a $\mathfrak g$--valued holomorphic one--form
\begin{equation}\label{e00}
\theta\, \in\, H^0(E_H,\, \Omega^1_{E_H}\otimes_{\mathbb C}
{\mathfrak g})
\end{equation}
satisfying the following three conditions:
\begin{enumerate}
\item $\beta^*_g\theta\, =\, \text{Ad}(g^{-1})\circ\theta$
for all $g\, \in\,H$, where $\beta_g$ is defined in \eqref{betah},

\item $\theta(z)(\zeta_v(z)) \, =\, v$ for all
$v\, \in\, {\mathfrak h}$ and $z\, \in\, E_H$ (see \eqref{z}
for $\zeta_v$), and

\item for each point $z\, \in\, E_H$, the homomorphism from the
holomorphic tangent space
\begin{equation}\label{e-1}
\theta(z) \,:\, T_zE_H\, \longrightarrow\, {\mathfrak g}
\end{equation}
is an isomorphism of vector spaces.
\end{enumerate}
(See \cite{S}.)

A holomorphic line bundle $L\,\longrightarrow\, Y$ is called
\textit{numerically effective} if $L$ admits Hermitian structures such
that the negative part of the curvatures are arbitrarily small
\cite[p. 299, Definition 1.2]{DPS}. If $Y$ is a projective manifold, then 
$L$ is numerically effective if and only if the restriction of it
to every complete curve has nonnegative degree. A holomorphic vector 
bundle
$E\,\longrightarrow \, Y$ is called \textit{numerically effective} if
the tautological line bundle ${\mathcal O}_{{\mathbb P}(E)}(1)
\,\longrightarrow\, {\mathbb P}(E)$ is numerically effective.

Let $X$ be a compact connected K\"ahler manifold such that the 
holomorphic tangent bundle $TX$ is numerically effective. Then there is 
a finite \'etale Galois covering
\begin{equation}\label{e1}
\gamma\, :\, M\, \longrightarrow\, X\, ,
\end{equation}
a complex torus $T$ and a holomorphic surjective submersion
\begin{equation}\label{e2}
f\, :\, M\, \longrightarrow\, T
\end{equation}
such that the fibers of $f$ are connected Fano manifolds with
numerically effective tangent bundle \cite[p. 296, Main~Theorem]{DPS}.

\begin{theorem}\label{thm1}
Let $(E'_H\, ,\theta')$
be a holomorphic Cartan geometry on $X$ of type $G/H$,
where $X$ is a compact connected K\"ahler manifold such that the
holomorphic tangent bundle $TX$ is numerically effective. Then
there is
\begin{enumerate}
\item a semisimple linear algebraic group $\mathcal 
G$ over $\mathbb C$,

\item a parabolic subgroup $P\, \subset\, \mathcal G$, and

\item a holomorphic principal $\mathcal G$--bundle
${\mathcal E}_{\mathcal G}\, \longrightarrow\, T$,
\end{enumerate}
such that the fiber bundle ${\mathcal E}_{\mathcal G}/P\,
\longrightarrow\, T$ is holomorphically isomorphic to the
fiber bundle $f$ in \eqref{e2}.
\end{theorem}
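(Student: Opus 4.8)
\smallskip
\noindent\emph{Strategy of the proof.}
The plan is to transport the Cartan geometry to $M$, use numerical effectiveness of $TX$ to make a suitable associated bundle numerically flat, exploit the simple connectivity of the Fano fibres of $f$ to trivialize it there, read off that each fibre is a flag variety, and finally reassemble $M$ as an associated bundle over $T$.

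First I would pull back $(E'_H,\theta')$ along $\gamma$ to the holomorphic Cartan geometry $(E_H,\theta)$ of type $G/H$ on $M$, and record two pieces of bookkeeping. On the one hand, extending the structure group gives a holomorphic principal $G$--bundle $E_G:=E_H\times^H G$ together with the unique holomorphic principal connection $\widehat\theta$ on $E_G$ restricting to $\theta$ on the reduction $E_H\subset E_G$; in particular the adjoint bundle $\mathrm{ad}(E_G)=E_H\times^H{\mathfrak g}$ carries a holomorphic connection, and $\theta$ yields the exact sequence
$$
0\,\longrightarrow\,\mathrm{ad}(E_H)\,\longrightarrow\,\mathrm{ad}(E_G)\,\longrightarrow\,TM\,\longrightarrow\,0 .
$$
On the other hand, since $T$ is a torus, $f^*TT$ is holomorphically trivial, one has $0\to T_f\to TM\to f^*TT\to 0$ for the relative tangent bundle $T_f$, and each fibre $F:=f^{-1}(t)$ is a Fano manifold with numerically effective tangent bundle; being Fano it is rationally connected, hence simply connected, with $H^q(F,{\mathcal O}_F)=0$ for $q>0$.

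Next I would show that $\mathrm{ad}(E_G)$ is numerically flat on $M$: it admits a holomorphic connection, hence has vanishing real Chern classes, and the numerical effectiveness of $TM$ together with the structure of $M$ coming from Demailly--Peternell--Schneider should upgrade this to numerical flatness (along the torus directions one expects a Hermitian flat bundle, along the Fano fibres a trivial one). By the structure theorem for numerically flat bundles, $\mathrm{ad}(E_G)$ is then a successive extension of Hermitian flat bundles, so its restriction to a simply connected fibre $F$ is holomorphically trivial; from this one deduces that $E_G|_F$, and hence $E_H|_F$, is a trivial holomorphic principal bundle. Feeding this back into $\widehat\theta|_F$ turns the connection into a flat development of $F$ with trivial monodromy, and one concludes that $F$ is a homogeneous space of a complex algebraic group acting with closed orbit; being a Fano manifold, $F$ is therefore a rational homogeneous variety ${\mathcal G}_0/P_0$ by the classical theorem of Borel. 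This step — extracting the numerical flatness of $\mathrm{ad}(E_G)$, and then the homogeneity of the fibres, from numerical effectiveness together with the Cartan geometry and the rational--curve geometry of $F$ — is where I expect the principal difficulty to lie.

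Finally I would reassemble the bundle. Put ${\mathcal G}:=\mathrm{Aut}^0(F)$, a connected semisimple linear algebraic group over ${\mathbb C}$ acting faithfully and transitively on $F$, and let $P\subset{\mathcal G}$ be the isotropy of a point, a parabolic subgroup, so that $F\cong{\mathcal G}/P$. Since $H^1(F,TF)=0$, all fibres of $f$ are biholomorphic and $f$ is holomorphically locally trivial (Fischer--Grauert), hence has structure group contained in $\mathrm{Aut}(F)$; reducing this to the identity component ${\mathcal G}$ — which is harmless, if necessary after enlarging the finite étale Galois covering $\gamma$, since the theorem of Demailly--Peternell--Schneider only asserts existence of such a covering — produces a holomorphic principal ${\mathcal G}$--bundle ${\mathcal E}_{\mathcal G}\to T$ with ${\mathcal E}_{\mathcal G}/P\to T$ holomorphically isomorphic to $f$. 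The same ${\mathcal G}$ and $P$ serve for all fibres because $T$ is connected, which completes the argument.
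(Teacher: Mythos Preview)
Your outline has a genuine gap at the step you yourself flag as the principal difficulty, and the paper's argument shows that this detour is unnecessary.

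You try to prove that $\mathrm{ad}(E_G)$ is numerically flat on all of $M$, but ``vanishing real Chern classes plus nef $TM$ plus the DPS structure should upgrade to numerical flatness'' is not an argument; vanishing Chern classes does not imply numerical flatness, and nothing you have written forces $\mathrm{ad}(E_G)$ or its dual to be nef. The paper never claims this. Instead it works fibrewise: restrict the induced holomorphic connection on $\mathrm{ad}(E_G)$ to a fibre $F=f^{-1}(x)$. Since $F$ is Fano, it is rationally connected, and a holomorphic connection on a vector bundle over a rationally connected variety is automatically \emph{flat} (this is the key input, \cite{Bi}). As $F$ is simply connected, $\mathrm{ad}(E_G)\vert_F$ is then holomorphically trivial. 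So numerical effectiveness of $TX$ is used only through the DPS structure theorem to produce the Fano fibration; after that, rational connectedness of the fibres does all the work.

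There is a second gap in your chain ``$\mathrm{ad}(E_G)\vert_F$ trivial $\Rightarrow$ $E_G\vert_F$ trivial $\Rightarrow$ $E_H\vert_F$ trivial $\Rightarrow$ flat development $\Rightarrow$ $F$ homogeneous.'' Triviality of the adjoint bundle does not imply triviality of the principal bundle (think of $G=\mathbb{C}^*$), and the passage to a developing map would need further justification. The paper avoids all of this: from the exact sequence $0\to\mathrm{ad}(E_H)\vert_F\to\mathrm{ad}(E_G)\vert_F\to TM\vert_F\to 0$ one first peels off the trivial quotient $F\times T_xT$ and, using that $\mathrm{ad}(E_G)\vert_F$ is trivial together with Atiyah's Krull--Schmidt theorem, shows the remaining kernel $\mathcal{K}^x$ is trivial; since $\mathcal{K}^x$ surjects onto $TF$, the tangent bundle $TF$ is globally generated, hence $F$ is homogeneous. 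Your endgame (set $\mathcal{G}=\mathrm{Aut}^0(F)$, use rigidity of $\mathcal{G}/P$ to identify all fibres, and build $\mathcal{E}_{\mathcal G}$ as the bundle of fibrewise isomorphisms) matches the paper.
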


\begin{proof}
Let 
\begin{equation}\label{f1}
(E_H\, ,\theta)
\end{equation}
be the holomorphic Cartan geometry on $M$ obtained by pulling back
the holomorphic Cartan geometry $(E'_H\, ,\theta')$ on $X$ using the
projection $\gamma$ in \eqref{e1}.

Let
\begin{equation}\label{eg}
E_G\, :=\, E_H\times^H G \, \longrightarrow\, M
\end{equation}
be the holomorphic principal $G$--bundle obtained by extending the
structure group of $E_H$ using the inclusion of $H$ in $G$. So
$E_G$ is a quotient of $E_H\times G$, and two points $(z_1\, ,g_1)$
and $(z_2\, ,g_2)$ of $E_H\times G$ are identified in $E_G$ if
there is an element $h\, \in\, H$ such that $z_2\,=\, z_1h$
and $g_2\,=\, h^{-1}g_1$. Let
$$
\theta_{\text{MC}}\, :\,
TG\, \longrightarrow\, G\times\mathfrak g
$$
be the $\mathfrak g$--valued Maurer--Cartan
one--form on $G$ constructed using
the left invariant vector fields.
Consider the $\mathfrak g$--valued holomorphic
one--form
$$
\widetilde{\theta}\, :=\, p^*_1 \theta + p^*_2\theta_{\text{MC}}
$$
on $E_H\times G$, where $p_1$ (respectively,
$p_2$) is the projection of $E_H\times G$ to
$E_H$ (respectively, $G$), and $\theta$ is
the one--form in \eqref{f1}. This form $\widetilde{\theta}$
descends to a $\mathfrak g$--valued holomorphic one--form on the
quotient space $E_G$ in \eqref{eg}, and the descended form
defines a holomorphic connection on $E_G$; see \cite{At2}
for holomorphic connection.
Therefore, the principal $G$--bundle $E_G$ in \eqref{eg} is
equipped with a holomorphic connection.
This holomorphic connection on $E_G$ will be denoted by $\nabla^G$.

The inclusion map ${\mathfrak h}\,\,\hookrightarrow\,\mathfrak g$
produces an inclusion
$$
\text{ad}(E_H)\,\hookrightarrow\, \text{ad}(E_G)
$$
of holomorphic vector bundles.
Using the form $\theta$, the quotient 
bundle $\text{ad}(E_G)/\text{ad}(E_H)$
gets identified with the holomorphic tangent bundle $TM$. Therefore,
we get a short exact sequence of holomorphic vector bundles on $M$
\begin{equation}\label{eq1}
0\,\longrightarrow\, \text{ad}(E_H)\,\longrightarrow\,\text{ad}(E_G)
\,\longrightarrow\, TM \,\longrightarrow\, 0\, .
\end{equation}

The holomorphic connection $\nabla^G$ on $E_G$ induces a holomorphic 
connection on the adjoint vector bundle $\text{ad}(E_G)$. This
induced connection on $\text{ad}(E_G)$ will be denoted by 
$\nabla^{\rm ad}$. For
any point $x\, \in\, T$, consider the holomorphic vector
bundle
\begin{equation}\label{e3}
\text{ad}(E_G)^x\, :=\, \text{ad}(E_G)\vert_{f^{-1}(x)}
\, \longrightarrow\, f^{-1}(x)
\end{equation}
(see \eqref{e2} for $f$). Let $\nabla^x$ be the holomorphic 
connection on $\text{ad}(E_G)^x$ obtained by restricting the 
above connection $\nabla^{\rm ad}$.

Any complex Fano manifold is rationally connected
\cite[p. 766, Theorem 0.1]{KMM}. In particular,
$f^{-1}(x)$ is a rationally connected smooth complex
projective variety. Since
$M$ is rationally connected, the curvature of the connection
$\nabla^x$ vanishes identically (see \cite[p. 160,
Theorem 3.1]{Bi}). From the fact that $f^{-1}(x)$ is rationally
connected it also follows that $f^{-1}(x)$ is
simply connected \cite[p. 545, Theorem 3.5]{Ca},
\cite[p. 362, Proposition 2.3]{Ko}. Since $\nabla^x$ is flat, and 
$f^{-1}(x)$ is simply connected, we conclude that the vector
bundle $\text{ad}(E_G)^x$ in \eqref{e3} is holomorphically trivial.

Let
\begin{equation}\label{e4}
0\,\longrightarrow\, \text{ad}(E_H)\vert_{f^{-1}(x)}
\, \,\longrightarrow\,\text{ad}(E_G)^x
\,\stackrel{\alpha}{\longrightarrow}\,
(TM)\vert_{f^{-1}(x)} \,\longrightarrow\, 0
\end{equation}
be the restriction to $f^{-1}(x)\, \subset\, M$ of the short
exact sequence in \eqref{eq1}. Let $T_xT$ be the tangent space
to $T$ at the point $x$. The trivial vector bundle over
$f^{-1}(x)$ with fiber $T_xT$ will be denoted by
$f^{-1}(x)\times T_xT$. Let
$$
(df)\vert_{f^{-1}(x)}\, :\, (TM)\vert_{f^{-1}(x)}
\,\longrightarrow\, f^{-1}(x)\times T_xT
$$
be the differential of $f$ restricted to $f^{-1}(x)$.
The kernel of the composition homomorphism
$$
\text{ad}(E_G)^x\,\stackrel{\alpha}{\longrightarrow}\,
(TM)\vert_{f^{-1}(x)} \,
\stackrel{(df)\vert_{f^{-1}(x)}}{\longrightarrow}\, 
f^{-1}(x)\times T_xT
$$
(see \eqref{e4} for $\alpha$) will be denoted by ${\mathcal K}^x$.
So, from \eqref{e4} we get the short exact sequence of vector bundles
\begin{equation}\label{e5}
0\,\longrightarrow\, {\mathcal K}^x
\, \,\longrightarrow\,\text{ad}(E_G)^x
\,\longrightarrow\,f^{-1}(x)\times T_xT \,\longrightarrow\, 0
\end{equation}
over $f^{-1}(x)$.

Since both $\text{ad}(E_G)^x$ and $f^{-1}(x)\times T_xT$ are 
holomorphically trivial, using
\eqref{e5} it can be shown that the vector bundle
${\mathcal K}^x$ is also holomorphically trivial. To prove that
${\mathcal K}^x$ is also holomorphically trivial, fix a point
$z_0\, \in\, f^{-1}(x)$, and fix a subspace
\begin{equation}\label{e6}
V_{z_0}\, \subset\, \text{ad}(E_G)^x_{z_0}
\end{equation}
that projects isomorphically to the fiber of
$f^{-1}(x)\times T_xT$ over the point $z_0$. Since
$\text{ad}(E_G)^x$ is holomorphically trivial, there is a unique
holomorphically trivial subbundle
$$
V\, \subset\, \text{ad}(E_G)^x
$$ 
whose fiber over $z_0$ coincides with the subspace $V_{z_0}$ in
\eqref{e6}. Consider the homomorphism
$$
V\, \longrightarrow\, f^{-1}(x)\times T_xT
$$
obtained by restricting the projection in \eqref{e5}. Since this
homomorphism is an isomorphism over $z_0$, and both $V$ and
$f^{-1}(x)\times T_xT$ are holomorphically trivial, we conclude
that this homomorphism is an isomorphism over $f^{-1}(x)$.
Therefore, $V$ gives a holomorphic splitting of the short exact
sequence in \eqref{e5}.
Consequently, the vector bundle $\text{ad}(E_G)^x$ decomposes as
\begin{equation}\label{de}
\text{ad}(E_G)^x\,=\, {\mathcal K}^x\oplus V\, .
\end{equation}
Since $\text{ad}(E_G)^x$ is trivial, from a theorem of Atiyah on 
uniqueness of decomposition, \cite[p. 315, Theorem 2]{At1},
it follows that the vector bundle
${\mathcal K}^x$ is trivial; decompose all the three vector bundles
in \eqref{de} as direct sums of indecomposable vector bundles, and
apply Atiyah's result.
{}From \eqref{e4} we get a short exact sequence of holomorphic vector
bundles
\begin{equation}\label{e7}
0\,\longrightarrow\, \text{ad}(E_H)\vert_{f^{-1}(x)}
\, \,\longrightarrow\,{\mathcal K}^x
\,\longrightarrow\, T(f^{-1}(x)) \,\longrightarrow\, 0\, ,
\end{equation}
where $T(f^{-1}(x))$ is the holomorphic tangent bundle of
$f^{-1}(x)$. Since ${\mathcal K}^x$ is trivial, from \eqref{e7}
it follows that the tangent bundle $T(f^{-1}(x))$ is generated
by its global sections. This immediately implies that
$f^{-1}(x)$ is a homogeneous manifold.

Since $f^{-1}(x)$ is a Fano homogeneous manifold, we conclude
that there is a semisimple linear algebraic group
${\mathcal G}'$ over $\mathbb C$, and a parabolic subgroup
$P'\, \subset\, {\mathcal G}'$,
such that $f^{-1}(x)\,=\, {\mathcal G}'/P'$.
Since a quotient space of the type ${\mathcal G}'/P'$ is rigid
\cite[p. 131, Corollary]{Ak},
if follows that any two fibers of $f$ are
holomorphically isomorphic.

Let
\begin{equation}\label{e8}
{\mathcal G}\, :=\, \text{Aut}^0(f^{-1}(x))
\end{equation}
be the group of all holomorphic automorphisms of $f^{-1}(x)$.
It is known that ${\mathcal G}$ is a connected semisimple
complex linear algebraic group \cite[p. 131, Theorem 2]{Ak}.
Since $f^{-1}(x)$ is isomorphic to ${\mathcal G}'/P'$, it
follows that ${\mathcal G}$ is a semisimple linear algebraic group
over $\mathbb C$ of adjoint type (this means that the center of
$\mathcal G$ is trivial). As before, let
\begin{equation}\label{e8a}
z_0\, \in\, f^{-1}(x)
\end{equation}
be a fixed point. Let
\begin{equation}\label{e9}
{\mathcal P}\, \subset\, {\mathcal G}
\end{equation}
be the subgroup that fixes the point $z_0$. Note that $\mathcal P$
is a parabolic subgroup of $\mathcal G$, and the quotient
${\mathcal G}/P$ is identified with $f^{-1}(x)$.

Consider the trivial holomorphic fiber bundle
$$
T\times f^{-1}(x)\, \longrightarrow\, T
$$
with fiber $f^{-1}(x)$.
Let ${\mathcal E}\, \longrightarrow\, T$ be the holomorphic
fiber bundle given by the sheaf of holomorphic isomorphisms from
$T\times f^{-1}(x)$ to $M$, where
$M$ is the fiber bundle in \eqref{e1}; recall that
all the fibers of $f$ are holomorphically isomorphic. It is 
straightforward
to check that $\mathcal E$ is a holomorphic principal
${\mathcal G}$--bundle, where $\mathcal G$ is the group defined
in \eqref{e8}. Let
\begin{equation}\label{e10}
\varphi\,:\, {\mathcal E}_{\mathcal G}\, :=\,
{\mathcal E}\,\longrightarrow\, T
\end{equation}
be this holomorphic principal ${\mathcal G}$--bundle.
The fiber of ${\mathcal E}_{\mathcal G}$ over any point
$y\,\in\, T$ is the space of all holomorphic
isomorphisms from $f^{-1}(x)$ to $f^{-1}(y)$.

So there is a natural projection
\begin{equation}\label{np}
{\mathcal E}_{\mathcal G}\, \longrightarrow\, M
\end{equation}
that sends any $\xi\, \in\, \varphi^{-1}(y)$ to the
image of the point $z_0$ in \eqref{e8a} by the
map
$$
\xi\,:\, f^{-1}(x) \,\longrightarrow\, 
f^{-1}(y)\, .
$$
This projection identifies the fiber bundle
$$
{\mathcal E}_{\mathcal G}/{\mathcal P}\,\longrightarrow\, T
$$
with the fiber bundle $M\, \longrightarrow\, T$,
where $\mathcal P$ is the subgroup in \eqref{e9}. This
completes the proof of the theorem.
\end{proof}

\section{Principal bundles over a torus}

Let $G_0$ be a reductive linear algebraic group
defined over $\mathbb C$.
Fix a maximal compact subgroup
$
K_0\, \subset\, G\, .
$
Let $Y$ be a complex manifold and $E_{G_0}\, \longrightarrow\, Y$
a holomorphic principal $G_0$--bundle over $Y$. A \textit{unitary
flat connection} on $E_{G_0}$ is a flat holomorphic connection
$\nabla^0$ on $E_{G_0}$ which has the following property: there
is a $C^\infty$ reduction of structure group
$
E_{K_0}\,\subset\, E_{G_0}
$
of $E_{G_0}$ to the subgroup $K_0$ such that $\nabla^0$ is induced
by a connection on $E_{K_0}$ (equivalently, the connection $\nabla^0$
preserves $E_{K_0}$). Note that $E_{G_0}$ admits a unitary flat
connection if and only if $E_{G_0}$ is given by a homomorphism
$\pi_1(Y)\, \longrightarrow\, K_0$.

Let $P\, \subset\, G_0$ be a parabolic subgroup. Let
$R_u(P)\, \subset\, P$ be the unipotent radical. The
quotient group $L(P)\, :=\, P/R_u(P)$, which is
called the Levi quotient of $P$, is reductive (see \cite[p. 158,
\S~11.22]{Bo}). Given a holomorphic principal $P$--bundle
$E_{P}\, \longrightarrow\, Y$, let
$$
E_{L(P)}\, :=\, E_{P}\times^P L(P)\, \longrightarrow\, Y
$$
be the principal $L(P)$--bundle obtained by extending the
structure group of $E_{P}$ using the quotient map
$P\, \longrightarrow\, L(P)$. Note that $E_{L(P)}$ is
identified with the quotient $E_{P}/R_u(P)$. By a
\textit{unitary flat connection} on $E_{P}$ we will mean
a unitary flat connection on the principal $L(P)$--bundle
$E_{L(P)}$ (recall that $L(P)$ is reductive).

A vector bundle $E\,\longrightarrow \, Y$ is called
\textit{numerically flat} if both $E$ and its dual $E^*$
are numerically effective \cite[p. 311, Definition 1.17]{DPS}.

\begin{proposition}\label{prop2}
Let $E_{G_0}$ be a holomorphic principal $G_0$--bundle over a
compact connected K\"ahler manifold $Y$. Then the following
four statements are equivalent:
\begin{enumerate}
\item There is a parabolic proper subgroup $P\, \subset\, G_0$
and a strictly anti--dominant character $\chi$ of $P$ such that
the associated line bundle
$$
E_{G_0}(\chi)\, :=\, E_{G_0}\times^{P} {\mathbb C}
\,\longrightarrow \, E_{G_0}/P
$$
is numerically effective.

\item The adjoint vector bundle ${\rm ad}(E_{G_0})$ is numerically
flat.

\item The principal $\mathcal G$--bundle $E_{G_0}$
is pseudostable, and $c_2({\rm ad}(E_{G_0}))\,=\, 0$
(see \cite[p. 26, Definition 2.3]{BG} for the definition of 
pseudostability).

\item There is a parabolic subgroup $P_0\, \subset\,
G_0$ and a holomorphic reduction of structure group
$
E_{P_0}\, \subset\, E_{G_0}
$
of $E_{G_0}$ such that $E_{P_0}$ admits a unitary flat connection.
\end{enumerate}
\end{proposition}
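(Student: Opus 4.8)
The plan is to prove Proposition \ref{prop2} by establishing a cycle of implications, say $(1)\Rightarrow(2)\Rightarrow(3)\Rightarrow(4)\Rightarrow(1)$, leaning on the characterization of numerically flat bundles from \cite{DPS} and the correspondence between pseudostable bundles with vanishing discriminant and bundles with unitary flat connections established in \cite{BG}. I would begin with $(1)\Rightarrow(2)$. Given the parabolic $P$ and the strictly anti-dominant character $\chi$ with $E_{G_0}(\chi)$ nef, the idea is that strict anti-dominance forces the reduction $E_P$ itself to be ``admissible'' in the sense of Ramanan--Ramanathan, and in fact the nefness of the associated line bundle over $E_{G_0}/P$ propagates to nefness of $\mathrm{ad}(E_{G_0})$: the adjoint bundle is filtered by subbundles coming from the $P$-reduction whose successive quotients are associated to $P$-modules on which the ``positive'' part of the weights is controlled by $\chi$, so each graded piece is nef; since an extension of nef bundles is nef, $\mathrm{ad}(E_{G_0})$ is nef. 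Running the same argument with $\chi$ replaced by $-\chi$ (now strictly dominant, giving a complementary filtration) yields nefness of the dual, hence numerical flatness. Here one must be a little careful: the correct statement is that a suitable parabolic reduction exists making $\mathrm{ad}$ nef both ways, and the cleanest route may actually be to invoke that $(1)$ implies $E_{G_0}$ is semistable of degree zero with respect to every polarization, then deduce $(2)$.

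Next, $(2)\Rightarrow(3)$: if $\mathrm{ad}(E_{G_0})$ is numerically flat, then by \cite[Theorem 1.18]{DPS} it admits a filtration by subbundles with Hermitian flat (hence numerically flat) quotients; in particular $\mathrm{ad}(E_{G_0})$ is semistable of degree zero with respect to any Kähler class, and all its Chern classes vanish, so in particular $c_2(\mathrm{ad}(E_{G_0}))=0$. Semistability of the adjoint bundle is exactly the condition that $E_{G_0}$ is pseudostable (after passing to the associated bundle for the adjoint representation and using that pseudostability of a principal bundle is detected on $\mathrm{ad}$, cf.\ \cite{BG}), so $(3)$ holds. Then $(3)\Rightarrow(4)$ is the main input from \cite{BG}: a pseudostable principal $G_0$-bundle with $c_2(\mathrm{ad})=0$ over a compact Kähler manifold admits a reduction to a parabolic $P_0$ whose Levi extension carries a unitary flat connection — this is essentially the principal-bundle version of the Uhlenbeck--Yau / Narasimhan--Seshadri correspondence for semistable bundles with vanishing discriminant, and I would cite it essentially verbatim.

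Finally, $(4)\Rightarrow(1)$: suppose $E_{P_0}\subset E_{G_0}$ is a holomorphic reduction with $E_{L(P_0)}$ admitting a unitary flat connection, i.e.\ $E_{L(P_0)}$ comes from a homomorphism $\pi_1(Y)\to K_{L(P_0)}$ into a maximal compact of the Levi. Then for any parabolic $P\supset$ (a Borel contained in) $P_0$ — or more precisely, choosing $P$ appropriately relative to $P_0$ — and any character $\chi$ of $P$ that is anti-dominant, the associated line bundle $E_{G_0}(\chi)$ over $E_{G_0}/P$ is, along each fiber direction, built from the unitary flat data, hence has a Hermitian metric whose curvature is semi-negative in the relevant directions and flat in the others; a short computation shows its restriction to every curve has nonnegative degree, so it is nef. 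The one genuinely delicate point is matching up the parabolics: the $P_0$ produced in $(4)$ need not be the $P$ appearing in $(1)$, and one should either take $P=P_0$ with a character pulled back from the Levi, or argue that the statement is insensitive to the choice. I expect the main obstacle to be exactly this bookkeeping with parabolic subgroups, characters, weights, and (anti)dominance — keeping straight which filtration of $\mathrm{ad}(E_{G_0})$ has nef graded pieces and ensuring the character in $(1)$ is strictly anti-dominant rather than merely anti-dominant; the analytic and homotopy-theoretic inputs (the $\pi_1$-description of unitary flat bundles, the nef filtration of numerically flat bundles, the \cite{BG} correspondence) are all available off the shelf.
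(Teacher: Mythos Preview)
The paper does not actually argue this proposition: its entire proof is the single sentence ``This proposition follows from \cite[p.~154, Theorem~1.1]{BS} and \cite[Theorem~1.2]{BB}.'' So your proposal is attempting to reconstruct the content of those two external theorems, which is far more than the paper itself undertakes.

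Within your sketch there is a genuine gap in $(1)\Rightarrow(2)$. You repeatedly speak of ``the reduction $E_P$'' and of a filtration of $\mathrm{ad}(E_{G_0})$ ``coming from the $P$-reduction,'' but statement~(1) contains no reduction of structure group: the line bundle $E_{G_0}(\chi)=E_{G_0}\times^{P}\mathbb{C}$ lives on the flag bundle $E_{G_0}/P\to Y$, and the hypothesis is only that it is nef on that total space. No holomorphic section of $E_{G_0}/P\to Y$ is assumed, so there is no $P$-structure over $Y$ with which to filter $\mathrm{ad}(E_{G_0})$, and your filtration argument does not get off the ground. The route actually taken in \cite{BS} is rather to use nefness of $E_{G_0}(\chi)$ over $E_{G_0}/P$ to control the degrees of \emph{all} parabolic reductions and thereby obtain semistability; your fallback remark (``(1) implies $E_{G_0}$ is semistable of degree zero'') points in this direction but is asserted, not argued. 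The implications $(2)\Rightarrow(3)\Rightarrow(4)$ are fine in outline (note, though, that the paper invokes \cite{BB} here, not only \cite{BG}), and your $(4)\Rightarrow(1)$ is plausible modulo the parabolic bookkeeping you yourself flag. For the purposes of this paper the correct move is simply to cite \cite{BS} and \cite{BB}, as the authors do.
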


\begin{proof}
This proposition follows from \cite[p. 154, Theorem 1.1]{BS}
and \cite[Theorem 1.2]{BB}.
\end{proof}

\begin{lemma}\label{lem2}
Let $T$ be a complex torus, and let $E_{G_0}\,\longrightarrow\, T_0$
be a holomorphic principal $G_0$--bundle.
Let $P\, \subset\, G_0$ be a parabolic subgroup. If the four equivalent
statements in Proposition \ref{prop2} hold, then the holomorphic tangent
bundle of $E_{G_0}/P$ is numerically effective.
\end{lemma}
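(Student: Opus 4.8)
The plan is to exploit statement (4) of Proposition \ref{prop2}: there is a parabolic subgroup $P_0 \subset G_0$ and a holomorphic reduction $E_{P_0} \subset E_{G_0}$ such that $E_{P_0}$ admits a unitary flat connection (in the sense defined just above, i.e.\ the extension $E_{L(P_0)}$ to the Levi quotient is given by a unitary representation of $\pi_1(T_0)$). Since $T_0$ is a complex torus, $\pi_1(T_0)$ is abelian, so this unitary representation factors through a torus; in particular $E_{L(P_0)}$ — and hence, after pulling back a faithful representation, the whole situation — is built from flat line bundles on $T_0$. The first step is therefore to reduce the computation of $T(E_{G_0}/P)$ to an assertion about such flat homogeneous bundles over $T_0$.

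First I would analyze the tangent bundle of $E_{G_0}/P$ via the smooth fibration $E_{G_0}/P \to T_0$ with fiber $G_0/P$, giving the exact sequence $0 \to T_{\mathrm{rel}} \to T(E_{G_0}/P) \to \varphi^* TT_0 \to 0$, where $T_{\mathrm{rel}}$ is the relative tangent bundle. The base $TT_0$ is trivial (a torus has trivial tangent bundle), hence numerically flat, hence nef. The relative tangent bundle $T_{\mathrm{rel}}$ is the bundle associated to $E_{G_0} \to E_{G_0}/P$ — viewed as a principal $P$-bundle — for the $P$-representation $\mathfrak{g}_0/\mathfrak{p}$. Since extensions of nef bundles by nef bundles are nef (\cite[p.\ 308, Proposition 1.15]{DPS}), it suffices to show $T_{\mathrm{rel}}$ is nef.

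The second, and main, step is to show $T_{\mathrm{rel}}$ is nef. Using the reduction $E_{P_0}$, I would first handle the case $P_0 = G_0$, i.e.\ $E_{G_0}$ itself comes from a unitary representation $\rho\colon \pi_1(T_0) \to K_0$; then $E_{G_0}/P$ has a flat structure, $T_{\mathrm{rel}}$ is the associated bundle of the flat $E_{G_0}/P$ with fiber-tangent-bundle data, and one checks directly that $T_{\mathrm{rel}}$ is globally generated by the $\mathfrak{g}_0$-action (it is a quotient of the trivial bundle with fiber $\mathfrak{g}_0$ via $\mathrm{ad}(E_{G_0}) \to T_{\mathrm{rel}}$, and $\mathrm{ad}(E_{G_0})$ is numerically flat by statement (2), in particular nef), so $T_{\mathrm{rel}}$ is nef as a quotient of a nef bundle. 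For the general case, I would reduce to this: the surjection $\mathrm{ad}(E_{G_0}) \twoheadrightarrow T_{\mathrm{rel}}$ of bundles on $E_{G_0}/P$ (coming from $\mathfrak{g}_0 \to \mathfrak{g}_0/\mathfrak{p}$ at the principal-$P$-bundle level) is still valid regardless of the reduction, and $\mathrm{ad}(E_{G_0})$ is numerically flat by the equivalence (2) in Proposition \ref{prop2}; since numerically flat bundles are nef and nefness passes to quotients (\cite[p.\ 307, Proposition 1.8]{DPS}), we conclude $T_{\mathrm{rel}}$ is nef, and we are done.

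The step I expect to be the main obstacle is justifying the surjection $\mathrm{ad}(E_{G_0}) \twoheadrightarrow T_{\mathrm{rel}}$ of vector bundles on $E_{G_0}/P$ cleanly — one must view $E_{G_0} \to E_{G_0}/P$ as a principal $P$-bundle, note that $T(E_{G_0}/P)$ pulls back on the total space to a bundle whose relative part over $T_0$ is $E_{G_0} \times^P (\mathfrak{g}_0/\mathfrak{p})$, and identify this as a $P$-equivariant quotient of $E_{G_0}\times^P \mathfrak{g}_0 = \mathrm{ad}(E_{G_0})$ restricted appropriately; the bookkeeping between $\mathrm{ad}(E_{G_0})$ as a bundle on $T_0$ versus its pullback to $E_{G_0}/P$ must be handled carefully. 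Once that identification is in place, the nefness statements are immediate from \cite{DPS}, since the defining hypothesis (via Proposition \ref{prop2}(2)) gives numerical flatness — hence nefness — of $\mathrm{ad}(E_{G_0})$, and nefness is stable under quotients and extensions.
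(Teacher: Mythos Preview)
Your proposal is correct and, once you strip away the detours, it is exactly the paper's argument: use the short exact sequence $0 \to T_\delta \to T(E_{G_0}/P) \to \delta^*TT_0 \to 0$, observe that $T_\delta$ is a quotient of (the pullback of) $\mathrm{ad}(E_{G_0})$, which is nef by statement (2) of Proposition~\ref{prop2}, and conclude by \cite[Proposition~1.15]{DPS}. The opening plan via statement (4), the abelian $\pi_1(T_0)$, and the special case $P_0=G_0$ are all unnecessary --- you never actually use them, and the paper doesn't either; the surjection $\delta^*\mathrm{ad}(E_{G_0}) \twoheadrightarrow T_\delta$ (coming from $\mathfrak{g}_0 \twoheadrightarrow \mathfrak{g}_0/\mathfrak{p}$ at the level of the principal $P$-bundle $E_{G_0}\to E_{G_0}/P$) is the only input needed, and it is a standard identification rather than an obstacle.
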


\begin{proof}
Assume that the four equivalent
statements in Proposition \ref{prop2} hold.

Let $\delta\, :\, E_{G_0}/P\, \longrightarrow\, T_0$ be the natural 
projection.
Let
$$
T_\delta\, :=\, {\rm kernel}(d\delta)\, \subset\, T(E_{G_0}/P)
$$
be the relative tangent bundle for the projection $\delta$. The vector
bundle $T_\delta\,\longrightarrow\,
E_{G_0}/P$ is a quotient of the adjoint vector bundle
${\rm ad}(E_{G_0})$. Since ${\rm ad}(E_{G_0})$ is numerically
effective (second statement in Proposition \ref{prop2}),
it follows that $T_\delta$ is numerically effective
\cite[p. 308, Proposition 1.15(i)]{DPS}.

Consider the short exact sequence of vector bundles on $E_{G_0}/P$
$$
0\,\longrightarrow\, T_\delta \,\longrightarrow\, T(E_{G_0}/P)
\,\stackrel{d\delta}{\longrightarrow}\, \delta^*TT_0
\,\longrightarrow\, 0\, .
$$
Since $\delta^*TT_0$ and $T_\delta$ are numerically effective
($TT_0$ is trivial), it follows that $T(E_{G_0}/P)$ is numerically
effective \cite[p. 308, Proposition 1.15(ii)]{DPS}.
This completes the proof of the lemma.
\end{proof}

As before, $X$ is a compact connected K\"ahler manifold such 
that $TX$ is numerically effective, and $(E'_H\, ,\theta')$
be a holomorphic Cartan geometry on $X$ of type $G/H$. Also,
$\gamma$ and $f$ are the maps constructed in \eqref{e1}
and \eqref{e2} respectively. Let
\begin{equation}\label{g1}
K^{-1}_f\, \longrightarrow\, M
\end{equation}
be the relative anti--canonical line bundle for the projection $f$.

Let $\mathcal G$ be the group in \eqref{e8},
and let ${\mathcal E}_{\mathcal G}\, \longrightarrow\,
T$ be the principal $\mathcal G$--bundle constructed in
\eqref{e10}. Let
$
\text{ad}({\mathcal E}_{\mathcal G})\,\longrightarrow\, T
$
be the adjoint vector bundle.

\begin{proposition}\label{lem1}
Let $X$ is a compact connected K\"ahler manifold such
that $TX$ is numerically effective, and let
$(E'_H\, ,\theta')$
be a holomorphic Cartan geometry on $X$ of type $G/H$. Then
the relative anti--canonical line bundle $K^{-1}_f$ in \eqref{g1}
is numerically effective. Also, the following three statements hold:
\begin{enumerate}
\item The adjoint vector bundle ${\rm ad}({\mathcal E}_{\mathcal G})$
is numerically flat.

\item The principal $\mathcal G$--bundle
${\mathcal E}_{\mathcal G}$ is pseudostable, and
$c_2({\rm ad}({\mathcal E}_{\mathcal G}))\,=\, 0$.

\item There is a parabolic subgroup ${\mathcal P}\, \subset\,
{\mathcal G}$ and a holomorphic reduction of structure group
$
{\mathcal E}_{\mathcal P}\, \subset\, {\mathcal E}_{\mathcal G}
$
of ${\mathcal E}_{\mathcal G}$ such that ${\mathcal E}_{\mathcal P}$
admits a unitary flat connection.
\end{enumerate}
\end{proposition}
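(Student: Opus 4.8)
The plan is to reduce Proposition \ref{lem1} to the equivalences of Proposition \ref{prop2} and the triviality results already established inside the proof of Theorem \ref{thm1}. The key observation is that the fibres of $f$ are rational homogeneous varieties ${\mathcal G}/{\mathcal P}$, so the relative anti--canonical bundle $K_f^{-1}$ restricts on each fibre to the anti--canonical bundle of ${\mathcal G}/{\mathcal P}$, which is very ample; in particular $K_f^{-1}$ is relatively ample. To get that $K_f^{-1}$ is numerically effective, I would use that its dual $K_f$ is a sub--line--bundle (up to twist) of $\Omega^1_M$, and that $TM$ sits in the extension $0\to f^*TT\to TM\to T_f\to 0$ with both $f^*TT$ (trivial, hence nef) and $T_f$ nef -- the latter because $T_f$ restricted to each fibre is the nef tangent bundle of the Fano fibre and, more precisely, because $\mathrm{ad}(E_G)^x$ being trivial feeds through \eqref{e7} to show $T(f^{-1}(x))$ is globally generated; combined with the fact that $TX$ is nef and $\gamma$ is \'etale, one deduces $\det T_f = K_f^{-1}$ is nef by \cite[Proposition 1.15]{DPS}. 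I would then invoke Lemma \ref{lem2} in reverse spirit together with Proposition \ref{prop2}: it suffices to verify \emph{one} of the four equivalent statements for $E_{G_0}={\mathcal E}_{\mathcal G}$ and then the other three follow formally.

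Concretely, I would establish statement (1), numerical flatness of $\mathrm{ad}({\mathcal E}_{\mathcal G})$, directly. Here is where the work done in Theorem \ref{thm1} pays off: the bundle ${\mathcal E}_{\mathcal G}\to T$ was constructed so that ${\mathcal E}_{\mathcal G}/{\mathcal P}\cong M$, and the holomorphic connection $\nabla^G$ on $E_G\to M$ restricts to a \emph{flat} connection $\nabla^x$ on $\mathrm{ad}(E_G)^x$ over each fibre, forcing $\mathrm{ad}(E_G)^x$ to be holomorphically trivial. I would argue that this fibrewise triviality, which encodes that the moduli of the pair (fibre, restricted adjoint bundle) is locally constant over $T$, implies that $\mathrm{ad}({\mathcal E}_{\mathcal G})$ is a flat bundle on the torus $T$ associated to a representation of $\pi_1(T)$; on a torus any such bundle with a connection whose curvature is controlled is numerically flat, because $TT$ is trivial and a flat bundle on a torus is numerically flat (it is filtered by line bundles of degree zero, equivalently it is semistable of degree $0$ with vanishing Chern classes). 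Alternatively, and perhaps more robustly, I would deduce numerical flatness of $\mathrm{ad}({\mathcal E}_{\mathcal G})$ by pulling back along the projection \eqref{np} ${\mathcal E}_{\mathcal G}\to M$ is not quite available, so instead I would use that $\mathrm{ad}({\mathcal E}_{\mathcal G})$ restricted to a fibre is trivial (the fibre is ${\mathcal G}$ itself as a homogeneous space under left translation, and the adjoint bundle of a principal bundle restricted to a fibre is canonically trivial) and that along the base directions the extension by $\gamma^*TX$-type terms is numerically effective, so both $\mathrm{ad}({\mathcal E}_{\mathcal G})$ and its dual are nef.

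Once statement (1) holds, statement (2) -- pseudostability of ${\mathcal E}_{\mathcal G}$ together with $c_2(\mathrm{ad}({\mathcal E}_{\mathcal G}))=0$ -- is immediate from the equivalence (2)$\Leftrightarrow$(3) in Proposition \ref{prop2}, and statement (3) -- existence of a parabolic reduction ${\mathcal E}_{\mathcal P}\subset{\mathcal E}_{\mathcal G}$ admitting a unitary flat connection -- is immediate from the equivalence (2)$\Leftrightarrow$(4). I would take the parabolic ${\mathcal P}$ to be exactly the one in \eqref{e9}, so that ${\mathcal E}_{\mathcal P}$ is the reduction corresponding to the section picked out by the marked point $z_0$; one should check that the reduction produced abstractly by Proposition \ref{prop2}(4) can be taken compatible with this, but since ${\mathcal E}_{\mathcal G}/{\mathcal P}\cong M\to T$ already admits the fibrewise structure and the connection $\nabla^G$ is unitary flat in the relevant sense, this identification is formal. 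The main obstacle I anticipate is the first one: rigorously producing the \emph{global} numerical flatness of $\mathrm{ad}({\mathcal E}_{\mathcal G})$ over $T$ from the fibrewise triviality -- i.e., controlling the variation over the base and ruling out a nontrivial (non-unitary) extension class -- rather than the purely formal transfers (2)$\to$(3)$\to$(4), and for this I would lean on the triviality of $TT$ and the nefness statements in \cite[Proposition 1.15]{DPS} to propagate nefness of $\mathrm{ad}({\mathcal E}_{\mathcal G})$ and of its dual from the fibres to all of ${\mathbb P}(\mathrm{ad}({\mathcal E}_{\mathcal G}))$.
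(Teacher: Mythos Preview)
Your strategy of reducing to Proposition~\ref{prop2} is exactly right, but you picked the wrong one of the four equivalent conditions to verify, and this is where the gap lies. You try to establish statement~(2) (numerical flatness of ${\rm ad}({\mathcal E}_{\mathcal G})$) directly, and as you yourself note, passing from the fibrewise triviality of ${\rm ad}(E_G)^x$ over $f^{-1}(x)\subset M$ to global numerical flatness of ${\rm ad}({\mathcal E}_{\mathcal G})$ over $T$ is not formal: these are different bundles living over different bases, and controlling the variation over $T$ is genuinely the hard part. Your sketched arguments for this step (flat bundle on a torus, propagating nefness from fibres via \cite[Proposition~1.15]{DPS}) do not actually close the gap; nefness does not in general propagate from restrictions to fibres to the total space, and you have not produced a connection on ${\rm ad}({\mathcal E}_{\mathcal G})$.

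The paper instead verifies statement~(1) of Proposition~\ref{prop2}, and this is where the argument becomes short. Since the canonical bundle $K_T$ of the torus is trivial, the relative anti--canonical bundle satisfies $K^{-1}_f\cong K^{-1}_M$; and $K^{-1}_M=\det TM$ is numerically effective simply because $TM=\gamma^*TX$ is nef. Since $M={\mathcal E}_{\mathcal G}/{\mathcal P}$ and $K^{-1}_f$ is relatively ample (the fibres are Fano), $K^{-1}_f$ is the line bundle associated to a strictly anti--dominant character of ${\mathcal P}$. This is exactly condition~(1) of Proposition~\ref{prop2}, and the three desired statements then drop out from the equivalences there. You actually had the ingredients for this (you observed $K^{-1}_f$ is relatively ample and discussed its nefness), but you did not recognise that this already \emph{is} condition~(1).

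A minor correction: the short exact sequence you wrote is reversed. The relative tangent bundle $T_f$ is the \emph{kernel} of $df$, so the sequence reads $0\to T_f\to TM\to f^*TT\to 0$, not $0\to f^*TT\to TM\to T_f\to 0$. This matters because subbundles of nef bundles need not be nef, so one cannot conclude $T_f$ is nef that way; fortunately only $\det T_f=K^{-1}_f$ is needed, and the determinant identity $K^{-1}_M=K^{-1}_f\otimes f^*K^{-1}_T=K^{-1}_f$ follows from the exact sequence regardless of its orientation.
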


\begin{proof}
Let $\gamma\,:\, M\,\longrightarrow\, X$ be
the covering in \eqref{e1}, and let
$
f\, :\, M\, \longrightarrow\, T
$
be the projection in \eqref{e2}. There is a 
semisimple complex linear algebraic group $\mathcal
G$, a parabolic subgroup $P\, \subset\, \mathcal G$, and
a holomorphic principal $\mathcal G$--bundle
${\mathcal E}_{\mathcal G}\, \longrightarrow\, T$
such that the fiber bundle ${\mathcal E}_{\mathcal G}/P\,
\longrightarrow\, T$ is holomorphically isomorphic to the
one given by $f$ (see Theorem \ref{thm1}).

Since the canonical line bundle $K_T\, \longrightarrow\, T$ is
trivial, the line bundle $K^{-1}_f$ is isomorphic to
$K^{-1}_M$. The anti--canonical line bundle $K^{-1}_M$ is numerically
effective because $TM$ is numerically effective. Hence
$K^{-1}_f$ is numerically effective. Recall that
${\mathcal E}_{\mathcal G}/{\mathcal P}\,=\, M$ using the projection
in \eqref{np}. The line bundle $K^{-1}_f$ corresponds to a
strictly anti--dominant character of $\mathcal P$ because
$K^{-1}_f$ is relatively ample. Hence the first of
the four statements in Proposition \ref{prop2} holds. Now
Proposition \ref{prop2} completes the proof of the proposition.
\end{proof}

\begin{proposition}\label{lem3}
Let $X$ and $(E'_H\, ,\theta')$ be as in Lemma \ref{lem1}.
The principal $\mathcal G$--bundle ${\mathcal E}_{\mathcal G}$ 
constructed in Theorem \ref{thm1} admits a flat holomorphic
connection.
\end{proposition}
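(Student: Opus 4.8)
The plan is to build the flat holomorphic connection from the parabolic reduction of Proposition \ref{lem1}(3), upgrading the unitary flat connection on the Levi quotient to a flat holomorphic connection on the whole of $\mathcal{E}_{\mathcal{G}}$; the hypothesis that the base $T$ is a complex torus will be used in an essential way.

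Recall from Proposition \ref{lem1} that there are a parabolic subgroup $\mathcal{P}\subset\mathcal{G}$ and a holomorphic reduction of structure group $\mathcal{E}_{\mathcal{P}}\subset\mathcal{E}_{\mathcal{G}}$ such that the Levi bundle $\mathcal{E}_{L(\mathcal{P})}=\mathcal{E}_{\mathcal{P}}/R_u(\mathcal{P})$ admits a unitary flat connection, hence arises from a homomorphism $\pi_1(T)\to K_{L(\mathcal{P})}$ into a maximal compact subgroup of $L(\mathcal{P})$. Since a flat holomorphic connection on $\mathcal{E}_{\mathcal{P}}$ induces, by extension of structure group along $\mathcal{P}\hookrightarrow\mathcal{G}$, a flat holomorphic connection on $\mathcal{E}_{\mathcal{P}}\times^{\mathcal{P}}\mathcal{G}=\mathcal{E}_{\mathcal{G}}$, it is enough to produce a flat holomorphic connection on $\mathcal{E}_{\mathcal{P}}$. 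Fixing a Levi subgroup $L\subset\mathcal{P}$, so that $\mathcal{P}$ is the semidirect product of $L$ with $R_u(\mathcal{P})$ and $L\cong L(\mathcal{P})$, the flat $L(\mathcal{P})$--bundle already defines a flat holomorphic $\mathcal{P}$--bundle; the remaining work is to interpolate between it and $\mathcal{E}_{\mathcal{P}}$ across the unipotent radical.

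To do this I would filter $R_u(\mathcal{P})$ by its lower central series $R_u(\mathcal{P})=U_1\supset U_2\supset\cdots\supset\{e\}$, with each quotient $U_i/U_{i+1}$ abelian, and construct the flat structure on $\mathcal{E}_{\mathcal{P}}$ one stage at a time. The passage from $\mathcal{E}_{\mathcal{P}}/U_{i+1}$ to $\mathcal{E}_{\mathcal{P}}/U_{i}$ amounts to realizing a given holomorphic extension class in $H^1(T,\,W_i)$ by a flat one, where $W_i$ is the flat vector bundle on $T$ associated, through $\mathcal{E}_{L(\mathcal{P})}$, to the $L(\mathcal{P})$--module $\mathrm{Lie}(U_i)/\mathrm{Lie}(U_{i+1})$. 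Over a complex torus this comparison is available: since $\pi_1(T)$ is abelian, $W_i$ and the relevant bundles of homomorphisms split as direct sums of unitary flat line bundles, and for such a line bundle on $T$ the natural map from the cohomology of the local system to the Dolbeault cohomology is surjective --- trivially when the line bundle is holomorphically nontrivial, since then its positive--degree cohomology vanishes, and by the Hodge-theoretic surjection of $H^1(T,\mathbb{C})$ onto $H^1(T,\mathcal{O}_T)$ in the trivial case. Assembling the flat structures obtained along the lower central series yields the required flat holomorphic connection on $\mathcal{E}_{\mathcal{P}}$, and hence on $\mathcal{E}_{\mathcal{G}}$.

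The delicate point, which I expect to be the main obstacle, is exactly this unipotent step: one has to check that at each stage $\mathcal{E}_{\mathcal{P}}$ really does present itself as an extension of the bundle already built by $W_i$, so that the comparison applies, and that the flat structures produced at successive stages fit together consistently --- in effect one is re-deriving over the torus $T$ the structure theory that underlies the equivalence in Proposition \ref{prop2}. A more economical alternative is to invoke directly the characterization, valid for holomorphic principal bundles over a complex torus, of admitting a flat holomorphic connection in terms of pseudostability together with the vanishing of $c_2$ of the adjoint bundle; both conditions hold for $\mathcal{E}_{\mathcal{G}}$ by Proposition \ref{lem1}(2), so this settles the statement at once.
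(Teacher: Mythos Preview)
Your final paragraph is exactly the paper's proof: it simply quotes Proposition~\ref{lem1}(2) --- ${\mathcal E}_{\mathcal G}$ is pseudostable with $c_2({\rm ad}({\mathcal E}_{\mathcal G}))=0$ --- and then invokes \cite[p.~20, Theorem~1.1]{BG}, which for a reductive structure group over any compact connected K\"ahler manifold (not only a torus) converts these two conditions into the existence of a flat holomorphic connection. So the ``economical alternative'' you sketch at the end is the intended argument in its entirety, and the torus hypothesis is not actually needed at this last step.

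Your longer first route --- lifting the unitary flat $L(\mathcal P)$--structure through the lower central series of $R_u(\mathcal P)$ by comparing flat and holomorphic extension classes in each $H^1(T,W_i)$ --- is a genuinely different, more constructive approach. The key ingredients you isolate are correct over a torus: since $\pi_1(T)$ is abelian, each $W_i$ decomposes into unitary flat line bundles; for a nontrivial such line bundle all coherent cohomology vanishes, and for the trivial one the surjection $H^1(T,\mathbb C)\twoheadrightarrow H^1(T,\mathcal O_T)$ does the job. What this buys is an explicit flat model for $\mathcal E_{\mathcal P}$ without appealing to the black box of \cite{BG}, at the price of the bookkeeping you flag (that the successive flat structures are compatible and that at each stage $\mathcal E_{\mathcal P}/U_{i+1}$ is the expected twisted extension of $\mathcal E_{\mathcal P}/U_i$). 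The paper bypasses all of this by citing \cite{BG} directly; your hands-on argument is essentially a specialization to the torus case of the machinery that proves that theorem.
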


\begin{proof}
We know that principal $\mathcal G$--bundle
${\mathcal E}_{\mathcal G}$ is pseudostable, and
$c_2({\rm ad}({\mathcal E}_{\mathcal G}))\,=\, 0$ (see the
second statement in Proposition \ref{lem1}). Hence the
proposition follows from \cite[p. 20, Theorem 1.1]{BG}.
\end{proof}


\end{document}